\newcommand{\Z}{{\mathbb Z}}
\newcommand{\alg}[1]{\mathbf{#1}}
\newcommand{\algA}{\alg{A}}
\DeclareMathOperator{\pEq}{\mathsf{pEq}}
\DeclareMathOperator{\pId}{\mathsf{pId}}
\DeclareMathOperator{\Pol}{Pol}
\theoremstyle{plain}
    \newtheorem{thm}{Theorem}[section]
    \newtheorem{theorem}[thm]{Theorem}
    \newtheorem{lemma}[thm]{Lemma}
    \newtheorem{corollary}[thm]{Corollary}
		\newtheorem{question}[thm]{Question}
\theoremstyle{definition}
    \newtheorem{definition}[thm]{Definition}
\theoremstyle{remark}
\title{The equation solvability problem over supernilpotent algebras with Mal'cev term}
\author{Michael Kompatscher}
\address{Department of Algebra, MFF UK,\\
Sokolovska 83, 186 00 Praha 8, Czech Republic\\
\email{michael@logic.at}\\
}
\thanks{
Supported by Charles University Research Centre program No.PRIMUS/SCI/12 and No.UNCE/SCI/022 as well as grant 18-20123S of the Czech Grant Agency (GA\v{C}R). ORCID iD: 0000-0002-0163-6604}
\keywords{equation solvability; supernilpotency; Mal'cev term; congruence permutable variety; identity checking.}
\subjclass[2010]{08A70, 08B10, 08A50, 68Q17}
\begin{document}

\begin{abstract}
In 2011 Horv\'ath gave a new proof that the equation solvability problem over finite nilpotent groups and rings is in P. In the same paper he asked whether his proof can be lifted to nilpotent algebras in general. We show that this is in fact possible for supernilpotent algebras with a Mal'cev term. However, we also describe a class of nilpotent, but not supernilpotent algebras with Mal'cev term that have coNP-complete identity checking problems and NP-complete equation solvability problems. This proves that the answer to Horv\'ath's question is negative in general (assuming P$\neq$NP).
\end{abstract}

\maketitle


\section{Introduction}
One of the oldest problems in algebra is to decide whether an equation over a given algebraic structure has a solution. In the last decades this problem has received increasing attention from a computational complexity point of view; in particular for finite algebras the aim is to identify conditions that either imply tractability or hardness of the corresponding equation solvability problem. Many results are known for finite groups and rings, of which several are based on commutator theory. For rings a complexity dichotomy holds: In~\cite{BurrisLawrence-rings} it was shown that the equation solvability problem over non-nilpotent rings is NP-complete, by~\cite{HorvathEqSolvNilp} the problem is in P for nilpotent rings.  

Nilpotency is also a source of tractability in the group case: It was proven in~\cite{goldmannrussell} (and reproven in~\cite{HorvathEqSolvNilp}) that the equation solvability over nilpotent groups is in P. By~\cite{Horvathetal-nonsolvablegroups} non-solvable groups induce NP-complete problems. Furthermore it was shown in \cite{HorvathSzabo-extendedgroups} that every solvable, non-nilpotent group has a polynomial extension, whose equation solvability problem is NP-complete. However it is still open whether a complexity dichotomy like over rings holds. In particular nilpotency does not demark the border between problems in P and NP-complete: By~\cite{HorvathSzabo-A4} the equation solvability over the non-nilpotent group $A_4$ is in P but its extension by the commutator $[\cdot,\cdot]$ has an NP-complete equation solvability problem. More general, meta-abelian groups~\cite{horvath-metaabelian} and semipattern groups~\cite{Attila-semipattern} induce equation solvability problems that are in P, while not necessarily being nilpotent.

Congruence permutable varieties generalize both the varieties of groups and rings and are well-studied in the context of commutator theory. It is hence natural to ask, whether the above dichotomy results can be generalized to all congruence permutable varieties. It was already observed in~\cite{AichingerMudrinski-highercomm} that the identity checking problem for supernilpotent such algebras is in P. Also our results indicate that not nilpotency, but supernilpotency is the right notion to work with: In Section~\ref{section:supernilpotent} we show that the equation solvability problems over finite supernilpotent algebras with Mal'cev term are in P. As a corollary of our proof we obtain a new characterization of supernilpotent algebras in congruence permutable varieties. In Section~\ref{section:nilpotent} we give examples of nilpotent, but not supernilpotent algebras (of infinite type) that have a Mal'cev term and induce coNP-complete identity checking problems and NP-complete equation solvability problems.

The following two subsections provide some necessary preliminary definitions and facts about nilpotent and supernilpotent algebras.

\subsection{Equation solvability and identity checking} \label{sect:eqsolv}
We are going to denote algebras by bold characters and their domain by the corresponding non-bold character (e.g. $\algA$ is an algebra on the set $A$). A \emph{polynomial} over an algebra $\algA$ is a term that is built from variables and elements of $A$ using the operation symbols of $\algA$. We write $\Pol(\algA)$ for the set of all polynomials over $\algA$ and $\Pol_n(\algA)$ for the set of polynomials of arity $n$. We say two algebras on the same domain $A$ are \emph{polynomially equivalent} if their polynomials induce the same operations on $A$.

The \emph{(polynomial) equation solvability problem} over an algebra $\algA$, short $\pEq(\algA)$, asks whether or not two polynomials $f(\bar x)$, $g(\bar x)$ over $\algA$ can attain the same value for some substitution over $\algA$. In other words, for input $f(\bar x)$, $g(\bar x)$ the question is whether $\algA \models \exists \bar x f(\bar x) = g(\bar x)$.

The \emph{(polynomial) identity checking problem} over $\algA$, short $\pId(\algA)$, asks whether a given equation is satisfied under \emph{all} substitution of the variables by elements of $\algA$. So, for two input polynomials $f(\bar x)$, $g(\bar x)$ the question is whether $\algA \models \forall \bar x f(\bar x) = g(\bar x)$. In literature the identity checking problem is sometimes also referred to as \emph{equivalence problem}.

When phrasing $\pEq(\algA)$ and $\pId(\algA)$ as actual computational problems, it is not obvious how to encode the input. However, when we are studying finite algebras of finite type, then an input term can just be encoded by the string defining it. Hence the size of an input polynomial $p(\bar x)$ is proportional to its \emph{length $l(p(\bar x))$}, i.e. the total number of functions, constants and variable symbols it contains.

We remark that this encoding might not be optimal, in the sense that it does not take into account that some expressions might be repeatedly used in the definition of a term. An alternative would be to describe terms by \emph{algebraic circuits}. This approach was suggested by Ross Willard; not much is known for this encoding. Also, for some algebras it makes sense to restrict the input to terms of a certain canonical form (for instance~\cite{SzaboVertesi-rings}, \cite{Horvath-Rings} and \cite{HLW-Sumofmonomialsrings} study the sum of monomials over rings); this is also something we are not considering here. For discussions on the size of term representation in supernilpotent algebras, see also~\cite{AichingerMudrinskiOprsal-length}.

\subsection{Nilpotent algebras with Mal'cev term}
A ternary term $m$ over an algebra $\algA$ is called a \emph{Mal'cev term} if it satisfies $m(y,x,x) = m(x,x,y) = y$ for all $x,y \in A$. It is well-known that an algebra has a Mal'cev term if and only if it is from a congruence permutable variety.
In this section we provide some results on nilpotent and supernilpotent algebras that have a Mal'cev term. For background on commutator theory we refer to~\cite{FreeseMcKenzie-Commutator}, for a survey on higher commutators we refer to~\cite{AichingerMudrinski-highercomm}.

Let $\algA$ be an algebra and $\alpha_1,\ldots,\alpha_n$ be congruence relations of $\algA$. Then $\delta = [\alpha_1,\ldots,\alpha_n]$ denotes the smallest congruence relation of $\algA$ such that for all polynomials $f(\bar x_1,\ldots, \bar x_n)$ and for all tuples $\bar a_1,\bar b_1, \ldots, \bar a_n, \bar b_n$ in $\algA$ with $\bar a_i \equiv_{\alpha_i} \bar b_i$ we have that, $f(\bar a_1, \bar x_2, \ldots, \bar x_n) \equiv_{\delta} f(\bar b_1, \bar x_2 \ldots, \bar x_n)$ for all $(x_2,\ldots,x_n) \in \prod_{i = 2}^n \{a_i,b_i\} \setminus \{(b_2,\ldots, b_n)\}$, implies that $f(\bar a_1, \bar b_2, \ldots, \bar b_n) \equiv_{\delta} f(\bar b_1, \bar b_2 \ldots, \bar b_n)$.

\begin{definition}
Let $\algA$ be an algebra and let $\mathbf{1}_\algA$ denote the total equivalence relation and $\mathbf{0}_\algA$ the identity on $\algA$. Then:
\begin{itemize}
\item $\algA$ is called \emph{nilpotent (of degree $n$)} if $[\mathbf{1}_\algA, \underbrace{[\mathbf{1}_\algA, \ldots, [\mathbf{1}_\algA,[\mathbf{1}_\algA,\mathbf{1}_\algA]] \ldots ]]}_{n}] = \mathbf{0}_\algA$.
\item  $\algA$ is called \emph{supernilpotent (of degree $n$)} if $[\underbrace{\mathbf{1}_\algA,\mathbf{1}_\algA,\ldots,\mathbf{1}_\algA}_{n+1}] = \mathbf{0}_\algA$.
\end{itemize}
\end{definition}

We remark that, for algebras with Mal'cev term, supernilpotency of degree $n$ implies nilpotency of degree $n$. For groups and rings the two notions are equivalent, this is however not true in general. Every nilpotent algebra with Mal'cev term gives rise to loop operations, where a loop is defined as follows:


\begin{definition}
An algebra $\alg{L} = (L,\cdot, \backslash ,/, 0)$ is called a \emph{loop} if for all $x,y \in L$:
\begin{enumerate}
\item $x \backslash (x \cdot y) = y$ and $(y \cdot x) / x = y$
\item $x \cdot (x \backslash y) = y$ and $(y / x) \cdot x = y$
\item $0 \cdot x = x \cdot 0 = x$
\end{enumerate}
\end{definition}
Then the following holds:

\begin{theorem}[Chapter 7 of~\cite{FreeseMcKenzie-Commutator}] \label{theorem:loops}
Let $\algA$ be a nilpotent algebra with a Mal'cev term $m(x,y,z)$. For each $0 \in A$ the operation defined by $x \cdot y = m(x,0,y)$ is a loop multiplication with neutral element $0$. Also the left and right inverse operations $\backslash$ and $/$ can be defined as polynomials over $\algA$. \hfill $\square$
\end{theorem}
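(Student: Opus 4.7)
The plan is to verify the three loop axioms in turn. Axiom (3), that $0$ is a two-sided identity, is immediate from the Mal'cev identity: $0 \cdot x = m(0,0,x) = x$ and $x \cdot 0 = m(x,0,0) = x$. The content of the theorem lies in axioms (1) and (2), which amount to showing that, for every $a \in A$, the left translation $L_a(y) := m(a,0,y)$ and right translation $R_a(y) := m(y,0,a)$ are permutations of $A$, and that their inverses can be realized by polynomials of $\algA$ uniformly in $a$. Those polynomials are then the operations $a \backslash y$ and $y / a$.

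I would prove the existence of the uniformly polynomial inverses by induction on the nilpotency degree $n$ of $\algA$. In the base case $n=1$ the algebra is abelian with a Mal'cev term, hence polynomially equivalent to a module over some ring (Herrmann's theorem). In that setting $m(x,y,z) = x - y + z$, so $L_a(y) = a+y$ and $R_a(y) = y+a$ are translations in an abelian group, and the polynomial inverses are given by $a \backslash y = m(0,a,y)$ and $y / a = m(y,a,0)$.

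For the inductive step, choose a nontrivial congruence $\theta$ contained in the center of $\algA$ (for instance, the last nonzero term of the lower central series, which is central because the commutator is monotone). Then $\algA/\theta$ is nilpotent of degree $n-1$ and admits the induced Mal'cev term, so by the inductive hypothesis there are polynomials $\overline{b}(u,v)$ and $\overline{c}(u,v)$ in $\Pol(\algA/\theta)$ witnessing that $\overline{L_u}$ and $\overline{R_u}$ are bijections on $\algA/\theta$. Lift these polynomials termwise to polynomials $b(a,y),c(y,a) \in \Pol(\algA)$. Then $L_a(b(a,y))$ is $\theta$-congruent to $y$, so the element $e(a,y) := m\bigl(y,\,L_a(b(a,y)),\,0\bigr)$ lies in the $\theta$-block of $0$, and centrality of $\theta$ lets us ``add'' this correction to $b(a,y)$ via the Mal'cev term to obtain the true inverse $a\backslash y$, still expressed by a single polynomial in $a$ and $y$. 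The symmetric argument works for $y/a$.

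The main obstacle is precisely this last step: turning the mod-$\theta$ inverse into an honest inverse on $\algA$. One has to exploit that within a central congruence, the Mal'cev term behaves as in an abelian group, so that the residual error $e(a,y)$ can be canceled by a further polynomial combination. This requires the standard fact that on any $\theta$-block the induced algebra is affine when $\theta$ is central, which in turn rests on the definition of the (binary) commutator and the presence of the Mal'cev term. Once this correction principle is in place, the polynomial formulas for $a \backslash y$ and $y / a$ assemble from the Mal'cev term and the inductively constructed polynomials, completing the proof.
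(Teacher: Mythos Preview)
The paper does not prove this statement: it is quoted from Chapter~7 of Freese and McKenzie's commutator book and closed with a bare $\square$, so there is no in-paper argument to compare yours against.

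Your outline is essentially the argument found in that reference. The identity axiom is indeed immediate from the Mal'cev identities; the abelian base case reduces to the module situation, where $m(0,a,y)$ and $m(y,a,0)$ do serve as $a\backslash y$ and $y/a$; and the inductive step passes to a central quotient and repairs the lifted inverse using the term condition for the central congruence. You are right that the correction step is where the substance lies. What makes it go through is that $C(\mathbf 1_\algA,\theta;\mathbf 0_\algA)$ forces the map $z\mapsto m(a,0,z)$ to be injective on each $\theta$-class (from $m(a,0,z)=m(a,0,z')$ with $z\equiv_\theta z'$ one obtains $m(0,0,z)=m(0,0,z')$, hence $z=z'$), while the abelian structure that $m$ induces on each $\theta$-block lets one manufacture, as a polynomial in $a$ and $y$, the unique perturbation of $b(a,y)$ inside its $\theta$-class whose image under $L_a$ is exactly $y$. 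Freese and McKenzie package this as part of their structure theory of nilpotent Mal'cev algebras; your sketch is a faithful summary of that development and already goes well beyond what the present paper records.
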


In other words every nilpotent algebra $\algA = (A, F)$ with Mal'cev term is polynomially equivalent to a nilpotent loop $(A,\cdot,\backslash,/)$ expanded by additional operations $F$. We denote this expansion by $(\algA,\cdot,\backslash,/)$. In order to further give a characterization of supernilpotent algebras we introduce the following notation: 

\begin{definition}
Let $f \in \Pol_m(\algA)$. We say $f(x_1,\ldots,x_m)$ \emph{absorbs} $(a_1,\ldots, a_m)$ to $a$ if $f(b_1,\ldots,b_m) = a$, whenever $b_i = a_i$ for some $i$. We say $f(x_1,\ldots,x_m)$ is \emph{$a$-absorbing} if $f$ absorbs $(a,a,\ldots,a)$ to $a$. 
\end{definition}

Then the following holds:

\begin{theorem}[Proposition 6.16. in \cite{AichingerMudrinski-highercomm}] \label{theorem:absorbing}
Let $\algA$ be an algebra with Mal'cev term and let $0 \in A$. Then $\algA$ is supernilpotent of degree $n$ if and only if every $0$-absorbing $c \in \Pol_{n} \algA$ is equivalent to $0$. \hfill $\square$
\end{theorem}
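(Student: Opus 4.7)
My plan is to prove each direction by directly manipulating the term condition defining the iterated commutator $[\mathbf{1}_\algA, \ldots, \mathbf{1}_\algA]$. The forward direction is essentially a single application of this condition. Suppose $\algA$ is supernilpotent of degree $n$, so the iterated commutator of $\mathbf{1}_\algA$ equals $\mathbf{0}_\algA$. Let $c$ be a $0$-absorbing polynomial of the relevant arity and fix any evaluation point. I would apply the defining term condition with $\delta = \mathbf{0}_\algA$ directly to $c$, setting $\bar a_i = 0$ and $\bar b_i$ to be the chosen coordinates. The $0$-absorbing hypothesis makes the premise of the term condition trivially satisfied: at every ``intermediate'' corner of the hypercube $\prod_i \{\bar a_i, \bar b_i\}$ (i.e.\ any corner other than the excluded one), at least one coordinate equals $0$, so both sides of the required equality collapse to $0$ by absorption. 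The conclusion of the term condition then forces $c$ to vanish at the evaluation point.

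For the converse, the plan is to reformulate an arbitrary instance of the term condition's hypothesis as a single $0$-absorbing polynomial, and then invoke the assumption. Given $f$ and tuples $\bar a_i, \bar b_i$ satisfying the hypothesis (with $\delta = \mathbf{0}_\algA$), I would construct, using the loop $(A, \cdot, \backslash, /, 0)$ from Theorem~\ref{theorem:loops}, a polynomial $c$ such that (i) $c$ is $0$-absorbing, precisely because the term-condition hypothesis enforces cancellation whenever some coordinate is ``off the baseline $\bar a_i$''; and (ii) at a distinguished tuple, $c$ evaluates to the loop-difference between $f(\bar a_1, \bar b_2, \ldots)$ and $f(\bar b_1, \bar b_2, \ldots)$, i.e.\ exactly the equality the term condition demands. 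Concretely, one first composes $f$ with loop-polynomial translations sending each $\bar a_i$ to $0$, and then combines the $2^{n+1}$ corner evaluations via an alternating inclusion--exclusion in the loop. The assumption that every $0$-absorbing polynomial is $\equiv 0$ then collapses this loop-difference to $0$, yielding the desired conclusion.

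The main obstacle is the precise construction of $c$ in the converse direction. Since loops are neither associative nor commutative in general, the alternating inclusion--exclusion expression must be parenthesized carefully --- either by an induction on the dimension or via an explicit ``difference polynomial'' assembled from the Mal'cev term $m$ and the loop inverses $\backslash$ and $/$. Verifying properties (i) and (ii) then becomes an inductive argument that tracks how the term-condition hypothesis propagates through the nested loop operations.
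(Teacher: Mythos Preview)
The paper does not give its own proof here; the statement is quoted from Aichinger--Mudrinski and closed with a $\square$, so there is nothing in-paper to compare against.

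Your forward direction is correct and is the standard one-line application of the term condition.

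Your converse plan has a genuine circularity. You build the absorbing polynomial $c$ using the loop operations $\cdot,\backslash,/$ supplied by Theorem~\ref{theorem:loops}, but that theorem is stated for \emph{nilpotent} algebras. In the converse direction the only hypothesis is that $0$-absorbing polynomials of the relevant arity vanish; you have not yet established nilpotency (indeed, in this paper nilpotency is deduced \emph{from} supernilpotency, which is what you are trying to prove), and without it $m(x,0,y)$ need not admit polynomial left and right inverses $\backslash,/$. The repair is to drop the loop entirely and assemble $c$ from the Mal'cev term alone: define the difference polynomial by an iterated nesting of $m$ over the corners of $\prod_i\{\bar a_i,\bar b_i\}$, so that every ``subtraction'' in your inclusion--exclusion is a single application of $m(\,\cdot\,,\,\cdot\,,\,\cdot\,)$. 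The Mal'cev identities $m(x,x,y)=y=m(y,x,x)$ are already enough to verify $0$-absorption inductively and to read off the intended value at the top corner; neither associativity nor the existence of $\backslash,/$ is needed. This is essentially how the original Aichinger--Mudrinski argument proceeds. A secondary point your sketch glosses over: the term condition is formulated for variable \emph{tuples} $\bar x_i$, while the hypothesis speaks only of polynomials in $\Pol_n$, so you must first reduce to single variables per block (equivalently, to $\bar a_i=0$) before invoking it; this reduction again uses only $m$-based translations, not loop inverses.
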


Theorems~\ref{theorem:absorbing} and~\ref{theorem:loops} were used in~\cite{WiresSupernilpotency} to give a canonical representation of polynomials in supernilpotent algebras with Mal'cev term. In the proof of our main result in the next section we will recapitulate Wires' proof and slightly refine it.

\section{Equation solvability in supernilpotent algebras with Mal'cev term} \label{section:supernilpotent}

In this section we show that the polynomial equation solvability problem $\pEq(\algA)$ is in P for supernilpotent $\algA$ with Mal'cev term. The main ingredient for this is Lemma~\ref{lemma:interpolation}, which states that computing the range of a polynomial expression $p(\bar x)$ over $\algA$ requires only to check substitutions of $\bar x$ for which the number of non $0$ entries is bounded by some constant $d$. In fact, we can show that this interpolation property is equivalent to supernilpotency for finite algebras with Mal'cev term.

We start with some basic observations. By Theorem~\ref{theorem:loops} we know that $\algA$ has polynomials that define loop operations $\cdot$, $\backslash$, $/$ and $0$. Clearly $\pEq(\algA)$ reduces to the equation solvability problem over the expanded algebra $(\algA, \cdot, \backslash, /, 0)$. Hence without loss of generality we can assume that $\algA$ contains the loop operations given by Theorem~\ref{theorem:loops}. (Note however, that we do not know a priori, whether $(\algA, \cdot, \backslash, /, 0)$ and $\algA$ have the same complexity up to polynomial time.)


In every algebra $\algA$ with loop operations, $f(\bar x) = g(\bar x)$ is equivalent to $f(\bar x)/g(\bar x) = 0$. Hence for both for the equation solvability problem and the identity checking problem we can restrict our input to equations of the form $f(x_1,\ldots,x_m) = 0$. 

Also note that $f(x_1,\ldots,x_m) = 0$ holds for all $x_1,\ldots,x_m$ if and only if none of the equations $f(x_1,\ldots,x_m)/a = 0$ for $0 \neq a \in A$ has a solution. Hence if $\algA$ is finite, the complement of $\pId(\algA)$ reduces to $\pEq(\algA)$ in polynomial time. It is however open if this holds for finite algebras in general, see also Problem 1 in~\cite{HorvathSzabo-groups}. It was already observed in~\cite{AichingerMudrinski-highercomm} that the polynomial identity checking problem is in P for supernilpotent algebras with Mal'cev term, hence our result can be seen as a strengthening of that.


We are going to stick to the following notation: Let us write $\prod_{i = 1}^n x_i$ or $x_1 \cdot x_2 \cdots x_n$ for the left associated product $( \cdots ((x_1 \cdot x_2) \cdot x_3) \cdots x_n)$ and let $x^n = \prod_{i = 1}^n x$. For $n \in \mathbb N$ let us write $[n] = \{1,\ldots, n\}$ and $\binom{[n]}{k} = \{S \subseteq [n] : |S| =k \}$. For a tuple $\bar x = (x_1, \ldots, x_n)$ of variables or elements of $\algA$ and $S \subseteq [n]$, let us write $\bar x_S$ for the $n$-tuple, where the $i$-th entry is equal to $x_i$ if $i \in S$ and $0$ otherwise, and let us write $\bar x_{\restriction S}$ for the $|S|$-tuple $(x_i)_{i \in S}$. If for instance $\bar x = (x_1,x_2,x_3,x_4,x_5)$ and $S = \{1,2,5\}$ then $\bar x_{S} = (x_1,x_2,0,0,x_5)$ and $\bar x_{\restriction S} = (x_1,x_2,x_5)$.

Our proof is going to rely on a representation of polynomials $f(x_1,\ldots, x_m) \in \Pol(\algA)$ as the product of $|S|$-ary $0$-absorbing terms $t_S(\bar x_{\restriction S})$ for all subsets $S \subseteq [m]$. That such representations exist was already known (see for instance Theorem 3.8 of \cite{WiresSupernilpotency}), but we are going to give a slightly more restrictive version that depends on a given enumeration of the elements of $A$ and a tuple $\bar b \in A^m$:

\begin{lemma} \label{lemma:canonicalrep}
Let $\algA$ be a finite nilpotent algebra with Mal'cev term, let $0 \in A$ and $f(x_1,\ldots,x_m) \in \Pol_m(\algA)$. Furthermore let $a_1, a_2, \ldots, a_N$ be an enumeration of the elements of $A$ and $\bar b \in A^m$. 
Then $f(x_1,\ldots,x_m)$ is equivalent to a polynomial of the form $\prod_{i = 0}^{m} r_i(x_1,\ldots,x_m)$, where the terms $r_i(x_1,\ldots,x_m)$ are given by the recursion $r_0(x_1,\ldots,x_m) = f(0,0,\ldots,0)$ and 
\begin{align}
t_S(\bar x_{\restriction S}) &= \left(\prod_{i = 0}^k r_i(\bar x_S)\right) \backslash f(\bar x_S) \text{ for } S \in \tbinom{[m]}{k+1}, \label{eq:tterms}\\
r_{k+1}(x_1,\ldots,x_m) &= \prod_{i=0}^{N} \prod_{\substack{S \in \binom{[m]}{k+1} \\t_S(\bar b_{\restriction S}) = a_i}} t_S(\bar x_{\restriction S}) \label{eq:rterms}, 
\end{align}
for $0 \leq k < m$. For all $S \subseteq [m]$ the $|S|$-ary terms $t_S(\bar x_{\restriction S})$ are $0$-absorbing.\\
If $\algA$ is moreover supernilpotent of degree $n$, then all terms $r_{k}(x_1,\ldots,x_m)$ for $k \geq n$ are equivalent to $0$.
\end{lemma}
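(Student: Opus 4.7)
My plan is to establish both the product representation and the absorbing property simultaneously by induction on $k$. The inductive claim at level $k$ reads: (a) $\prod_{i=0}^{k} r_i(\bar x_S) = f(\bar x_S)$ holds for every $S \subseteq [m]$ with $|S| \leq k$, and (b) every $t_T$ with $|T| = k$ is $0$-absorbing. The base case $k = 0$ is immediate from $r_0 = f(0,\ldots,0)$, with no $t_T$'s to check.

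For the step $k \to k+1$, I would first deduce (b) from (a): substituting $x_j = 0$ (with $j \in T$) in the defining formula~\eqref{eq:tterms} of $t_T$ replaces the $m$-tuple $\bar x_T$ by $\bar x_{T \setminus \{j\}}$, so by (a) applied to $T \setminus \{j\}$ the factor $\prod_{i=0}^{k} r_i(\bar x_{T \setminus \{j\}})$ equals $f(\bar x_{T \setminus \{j\}})$, whence $t_T|_{x_j = 0} = f(\bar x_{T \setminus \{j\}}) \backslash f(\bar x_{T \setminus \{j\}}) = 0$. I would then prove (a) at level $k+1$. A look at~\eqref{eq:rterms} shows that in $r_{k+1}(\bar x_S)$ each factor $t_T((\bar x_S)_{\restriction T})$ with $T \neq S$ has some argument equal to $0$ (coming from an index $j \in T \setminus S$), and therefore vanishes by (b) at level $k+1$. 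Using the loop identities $0 \cdot a = a \cdot 0 = a$ the left-associated product collapses, giving $r_{k+1}(\bar x_S) = t_S(\bar x_{\restriction S})$ when $|S| = k+1$ and $r_{k+1}(\bar x_S) = 0$ when $|S| \leq k$. The inductive hypothesis (a) at level $k$, together with the loop identity $a \cdot (a \backslash b) = b$, then yields (a) at level $k+1$.

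Setting $k = m$ and $S = [m]$ gives $f(\bar x) = \prod_{i=0}^m r_i(\bar x)$, and the $0$-absorption of all $t_S$ follows from (b). For the last claim, observe that supernilpotency of degree $n$ implies supernilpotency of degree $k$ for every $k \geq n$, since the $(k+1)$-fold commutator $[\mathbf{1}_\algA,\ldots,\mathbf{1}_\algA]$ is contained in the $(n+1)$-fold one. Hence Theorem~\ref{theorem:absorbing} forces every $0$-absorbing polynomial of arity $k \geq n$ to be equivalent to $0$; since each $t_S$ appearing in $r_k$ has $|S| = k$ and is $0$-absorbing, the left-associated product $r_k$ must likewise be equivalent to $0$.

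The only real subtlety is that loop multiplication is in general non-associative, so one cannot re-bracket the long product $r_{k+1}$ freely. However, the collapse argument uses only $0 \cdot a = a \cdot 0 = a$, which is part of the loop axioms, so no associativity is needed. With that caveat in mind, the rest is a straightforward double induction unpacking the definitions.
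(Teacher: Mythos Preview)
Your proof is correct and follows essentially the same induction as the paper: both establish simultaneously that the $t_S$ are $0$-absorbing and that $\prod_{i=0}^{k} r_i(\bar x_S) = f(\bar x_S)$ for $|S| \leq k$, then invoke Theorem~\ref{theorem:absorbing} for the supernilpotent claim. Your explicit remark that the collapse of the left-associated product uses only the two-sided identity $0$ (and no associativity) is a welcome clarification that the paper leaves implicit.
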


Before we give the proof of Lemma \ref{lemma:canonicalrep} we would like to point out that this representation is not unique: depending on the ordering of the sets $S \in \binom{[m]}{k+1}$ with $t_S(\bar b_{\restriction S}) = a_i$ in (\ref{eq:rterms}) we might get different representations of $f(x_1,\ldots,x_m)$. However the main reason for us to show Lemma \ref{lemma:canonicalrep} is not to represent $f$ in a canonical way, but to show that for every $T \subseteq [m]$, $f(\bar b_T)$ is evaluated to a product of powers of the elements of $\algA$
$$f(\bar b_T)  = \prod_{i = 1}^{n-1} a_1^{\beta_{i,1}} \cdot a_2^{\beta_{i,2}} \cdots a_N^{\beta_{i,N}},$$
where $n$ is the degree of supernilpotency of $\algA$. This fact will be essential in the proof of Lemma \ref{lemma:interpolation}.

\begin{proof}[Proof of Lemma \ref{lemma:canonicalrep}]
We are going to prove the following: For every $0 \leq k \leq m$ and every $S \in \binom{[m]}{k}$ we have that $t_S(\bar x_{\restriction S})$ is $0$-absorbing and
\begin{equation}\label{eq:ind}f(\bar x_S) = \prod_{i = 0}^k r_i(\bar x_S).\end{equation}We prove the claim by induction on $k$.

For $k = 0$ we have $r_0(x_1,\ldots,x_m) = f(0,\ldots,0)$ and $t_\emptyset = 0$, which clearly satisfies the claim. So let us consider the induction step $k \to k+1$. We first show that for every $S \in \binom{[m]}{k+1}$ the term $t_S(\bar x_{\restriction S})$ is 0-absorbing: For that, let $S'$ be a proper subset of $S$. Then, by the definition of $t_S(\bar x_{\restriction S})$ in (\ref{eq:tterms}) and the induction hypothesis (\ref{eq:ind}) for $S'$ we have
$$t_S((\bar x_{\restriction S})_{S'}) = \left(\prod_{i = 0}^k r_i(\bar x_{S'})\right) \backslash f(\bar x_{S'}) = f(\bar x_{S'}) \backslash f(\bar x_{S'}) = 0.$$
Hence $t_S$ is $0$-absorbing for every $S \in \binom{[m]}{k+1}$. In order to show (\ref{eq:ind}) for $S$ note that if we evaluate $r_{k+1}$ at $\bar x_S$, all factors in (\ref{eq:rterms}) except for $t_S(\bar x_{\restriction S})$ are equivalent to $0$, since they are $0$-absorbing. Therefore
$$\prod_{i = 0}^{k+1} r_i(\bar x_S) = \left(\prod_{i = 0}^{k} r_i(\bar x_{\restriction S})\right) \cdot t_S(\bar x_{\restriction S}) = \left(\prod_{i = 0}^{k} r_i(\bar x_S)\right) \cdot \left(\left(\prod_{i = 0}^k r_i(\bar x_S)\right) \backslash f(\bar x_S)\right) = f(\bar x_S),$$
which proves the claim for $k+1$. Thus we proved our claim. The first part of the Lemma follows from (\ref{eq:ind}) for $m = k$.

If $\algA$ is moreover supernilpotent of degree $n$ all the terms $t_S(\bar x_{\restriction S})$ for $|S| \geq n$ are equivalent to $0$, since they are 0-absorbing (cf. Theorem~\ref{theorem:absorbing}). Therefore also all terms $r_{k}(x_1,\ldots,x_m)$ are equivalent to $0$ for $k \geq n$.
\end{proof}

We are going to use Lemma \ref{lemma:canonicalrep} together with the following iterated version of Ramsey's theorem to prove Lemma~\ref{lemma:interpolation}.

\begin{theorem}[Ramsey's theorem] \label{theorem:Ramsey}
Let $n,k$ and $l$ be positive integers. Then there exists a positive integer $d = d(n,k,l)$, such that for all sets $S$ with $|S| \geq d$ and for all $k$-colorings $\gamma$ of the $\leq n$-elements subsets of $S$, there exists $H \subseteq S$ with $|H|=l$ such that all subsets of $H$ of the same size have the same color.
\end{theorem}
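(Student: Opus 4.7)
The plan is to derive this iterated version from the classical Ramsey theorem for uniform hypergraphs, which asserts that for every triple of positive integers $(n,k,l)$ there exists a number $R(n,k,l)$ such that any $k$-coloring of the $n$-element subsets of a set of size at least $R(n,k,l)$ admits a monochromatic $l$-element subset. I would proceed by induction on $n$, defining the bound $d(n,k,l)$ recursively by $d(0,k,l) = l$ and $d(n,k,l) = R(n, k, d(n-1,k,l))$ for $n \geq 1$. The base case is trivial since only the empty set and singletons need be considered.

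For the inductive step, suppose $S$ satisfies $|S| \geq d(n,k,l)$ and $\gamma$ is a $k$-coloring of the subsets of $S$ of size at most $n$. I would first apply the classical Ramsey theorem to the restriction of $\gamma$ to the $n$-element subsets of $S$, extracting a subset $S' \subseteq S$ with $|S'| \geq d(n-1,k,l)$ all of whose $n$-subsets share a common color. Restricting $\gamma$ to the subsets of $S'$ of size at most $n-1$ and invoking the induction hypothesis then yields a set $H \subseteq S'$ with $|H| = l$ in which every family of equal-size subsets of cardinality at most $n-1$ is monochromatic. Since $H \subseteq S'$, the $n$-element subsets of $H$ are also monochromatic, so $H$ witnesses the desired conclusion.

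There is no genuine obstacle here: the statement is a standard consequence of the uniform Ramsey theorem, and most authors (presumably including this one) cite rather than prove it. The only mild bookkeeping issue is ensuring that the recursion for $d$ simultaneously accommodates both the target size $l$ and the sizes needed for the successive applications of Ramsey's theorem, which is precisely what the choice $d(n,k,l) = R(n,k,d(n-1,k,l))$ achieves.
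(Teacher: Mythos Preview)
Your argument is correct: the induction on $n$ with the recursive bound $d(n,k,l) = R(n,k,d(n-1,k,l))$ is the standard way to pass from the uniform hypergraph Ramsey theorem to this simultaneous version, and your bookkeeping is sound. Your suspicion is also confirmed: the paper states Theorem~\ref{theorem:Ramsey} without proof, treating it as a classical fact to be invoked rather than established, so there is no ``paper's own proof'' to compare against. Your proposal therefore goes beyond what the paper does; nothing needs to be changed.
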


\begin{lemma} \label{lemma:interpolation}
Let $\algA$ be a finite algebra with Mal'cev term that is supernilpotent of degree $n$ and let $0 \in A$. Then there exists a positive integer $d = d(\algA)$ such that for every $m \geq d$, for every polynomial $f \in \Pol_m(\algA)$ and for every $\bar b = (b_1, \ldots, b_m) \in A^m$ there exists a set $T \in \binom{[m]}{d}$ with $f(\bar b_T) = f(\bar b)$.
\end{lemma}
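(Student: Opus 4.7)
The plan is to combine the canonical representation of $f$ provided by Lemma~\ref{lemma:canonicalrep} with an iterated application of Ramsey's theorem (Theorem~\ref{theorem:Ramsey}). First, apply Lemma~\ref{lemma:canonicalrep} to the given $\bar b$ and a fixed enumeration $a_1,\dots,a_N$ of $A$ to rewrite $f(\bar x) \equiv \prod_{k=0}^{n-1} r_k(\bar x)$, where each $r_k$ is a product of $0$-absorbing $k$-ary terms $t_S(\bar x_{\restriction S})$ grouped by their values at $\bar b$, and where all higher $r_k$ vanish by supernilpotency. Substituting $\bar x = \bar b_T$ and using $0$-absorption collapses every factor $t_S$ with $S \not\subseteq T$ to $0$, yielding
\[
f(\bar b_T) \;=\; \prod_{k=0}^{n-1} \prod_{j=1}^{N} a_j^{\beta_{k,j}(T)},
\qquad
\beta_{k,j}(T) := \bigl|\{S \subseteq T : |S|=k,\; t_S(\bar b_{\restriction S}) = a_j\}\bigr|.
\]

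Next, for each $k \in \{1,\dots,n-1\}$ colour $\binom{[m]}{k}$ by the value $t_S(\bar b_{\restriction S}) \in A$. Iterating Theorem~\ref{theorem:Ramsey} over the $n-1$ sizes (with $|A|$ colours each), and assuming $m$ exceeds the resulting iterated Ramsey number (which depends only on $|A|$ and $n$), yields a subset $H \subseteq [m]$ of some size $l$ on which $t_S(\bar b_{\restriction S}) = c_k$ is constant for every $S \in \binom{H}{k}$ and every $k \leq n-1$. For any $T \subseteq H$ the display above then collapses to $f(\bar b_T) = \prod_{k=0}^{n-1} c_k^{\binom{|T|}{k}} =: v_{|T|}$, a function of $|T|$ alone (with $c_0 := f(0,\dots,0)$).

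Third, since $(A,\cdot,\backslash,/,0)$ is a finite loop, for every $a \in A$ the sequence $(a^s)_{s \geq 0}$ is eventually periodic; hence $j \mapsto v_j$ is itself eventually periodic with some period $Q$ and pre-period $P_0$ depending only on $|A|$ and $n$. Once $l$ is large enough we can pick a constant $d > P_0$ with $l \equiv d \pmod Q$, giving $v_d = v_l$ and thus $f(\bar b_T) = f(\bar b_H)$ for every $T \subseteq H$ with $|T| = d$.

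The hard part is to pass from $f(\bar b_H)$ back to $f(\bar b) = f(\bar b_{[m]})$: the latter also picks up contributions from $t_S$ with $S$ intersecting $[m] \setminus H$, which the basic Ramsey colouring does not control. One way to close this gap is to refine the second step by simultaneously colouring, for each $S_0 \subseteq [m] \setminus H$ of size at most $n$ and each $k' \leq n - |S_0|$, the sets $S_1 \in \binom{H}{k'}$ by $t_{S_0 \cup S_1}(\bar b_{\restriction S_0 \cup S_1})$; an iterated canonical Ramsey argument should then produce an $H$ on which $t_S(\bar b_{\restriction S})$ depends only on $|S \cap H|$ and $S \setminus H$, so that $f(\bar b)$ admits the same $v_{|H|}$-style expression and the periodicity argument in the previous paragraph delivers the required $T \in \binom{[m]}{d}$.
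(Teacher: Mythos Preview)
Your first two steps---the absorbing decomposition of $f(\bar b_T)$ and the Ramsey step giving a homogeneous $H$---are fine, and the periodicity idea in the third step can be made to work using the loop exponent. The genuine gap is exactly where you flag it: getting from $f(\bar b_H)$ to $f(\bar b_{[m]}) = f(\bar b)$. Your suggested repair is circular: you propose to colour $S_1 \in \binom{H}{k'}$ by values $t_{S_0\cup S_1}(\bar b_{\restriction S_0\cup S_1})$ with $S_0 \subseteq [m]\setminus H$, but $H$ is the set the Ramsey argument is supposed to \emph{produce}, so it cannot appear in the colouring that is fed into Ramsey. If instead you let $S_0$ range over all of $[m]$, the number of colourings is exponential in $m$ and the resulting Ramsey bound depends on $m$, not only on $\algA$; the ``canonical Ramsey'' invocation is left as a hope (``should'') rather than an argument, and I do not see how to make it yield a $d$ independent of $m$.

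The paper avoids this obstacle by turning the problem around. Instead of looking for a small $T$ inside a homogeneous set, it looks for a nonempty $H$ to \emph{delete}: it shows $f(\bar b_{H^c}) = f(\bar b)$ and then iterates. The key is a different colouring: for $I \subseteq [m]$ with $|I|<n$, set $\gamma_{i,j}(I) := |\{S \in \binom{[m]}{i} : I \subseteq S,\; t_S(\bar b_{\restriction S}) = a_j\}| \bmod e$, where $e$ is the loop exponent. Inclusion--exclusion gives $\alpha_{i,j}-\beta_{i,j} = -\sum_{\emptyset \neq I \subseteq H,\,|I|<n} (-1)^{|I|}\gamma_{i,j}(I)$, where $\alpha,\beta$ are the exponents for $\bar b$ and $\bar b_{H^c}$. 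Applying Theorem~\ref{theorem:Ramsey} to this colouring yields an $H$ of size $l = e\cdot(n-1)!$ on which $\gamma$ is constant on each level; then $\binom{l}{s} \mid \sum_{|I|=s}\gamma_{i,j}(I)$ and $e \mid \binom{l}{s}$ for $s<n$, so $\alpha_{i,j} \equiv \beta_{i,j} \pmod e$ and $f(\bar b) = f(\bar b_{H^c})$. The point is that $\gamma_{i,j}(I)$ is defined for $I \subseteq [m]$ \emph{before} $H$ is chosen, so there is no circularity, and it already encodes the interaction of $I$ with the rest of $[m]$ via the supersets $S \supseteq I$---precisely the information your approach lacks.
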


\begin{proof}
We follow the proof steps of the analogous result for nilpotent groups in \cite[Lemma 3.1]{HorvathEqSolvNilp}. Let $e$ be the \emph{exponent} of $\algA$, i.e. the smallest positive integer such that $x^e = 0$ for all $x \in A$ and let $l = e \cdot (n-1)!$ and $k = e^{n \cdot |A|}$. We then claim that the Ramsey number $d = d(n-1,k,l)$ given by Theorem \ref{theorem:Ramsey} satisfies the Lemma.

First recall the representation result in Lemma \ref{lemma:canonicalrep}. For a given enumeration $a_1, \ldots a_{N}$ of the elements of $\algA$ it gives us
$$f(\bar b)  = \prod_{i = 1}^{n-1} a_1^{\alpha_{i,1}} \cdot a_2^{\alpha_{i,2}} \cdot \cdots \cdot a_N^{\alpha_{i,N}},$$
where $\alpha_{i,j}$ is the number of sets $S \in \binom{[m]}{i}$, such that $t_S(\bar b_{\restriction S}) = a_j$. Let $H$ be an arbitrary subset of $[m]$ and $H^c = [m] \setminus H$. Since all of the terms $t_S(\bar x_{\restriction S})$ are $0$-absorbing, we have the same representation for $f(\bar b_{H^c})$, i.e.
$$f(\bar b_{H^c})  = \prod_{i = 1}^{n-1} a_1^{\beta_{i,1}} \cdot a_2^{\beta_{i,2}} \cdot \cdots \cdot a_N^{\beta_{i,N}},$$
where $\beta_{i,j}$ is the number of sets $S \in \binom{[m]}{i}$, $S \subseteq H^c$, such that $t_S(\bar b_{\restriction S}) = a_j$. 

We claim that there is a non-empty set $H$ such that the corresponding exponents $\beta_{i,j}$ are equal to $\alpha_{i,j}$ modulo $e$. If this is the case, we have that $f(\bar b) = f(\bar b_{H^c})$. If $|H^c| \leq d$, we set $T = H^c$ and are done. Otherwise we can find such $T$ by iterating the procedure for the $|H^c|$-ary polynomial defined by $f(\bar x_{H^c})$. Hence it only remains to prove this claim.

For every set $I \subseteq [m]$ let $\gamma_{i,j}(I)$ denote the number of all set $S \in \binom{[m]}{i}$ such that $t_S(\bar b_{\restriction S}) = a_j$ and $I \subseteq S$. By the inclusion-exclusion principle we have for a given $H$ that
$$\alpha_{i,j}-\beta_{i,j} = -\sum_{\substack{\emptyset \neq I \subseteq H \\ |I| \leq i}} (-1)^{|I|} \gamma_{i,j}(I) = -\sum_{\substack{\emptyset \neq I \subseteq H \\ |I| < n}} (-1)^{|I|} \gamma_{i,j}(I).$$
We show that there is an $H$ such that all summands of the form $\sum_{I \subseteq H, |I| = s} \gamma_{i,j}(I)$ for $s < n$ are divisible by $e$. The function $\gamma(I) := (\gamma_{i,j}(I))_{i \in [n], j \in [N]}$ is a coloring of subsets of $[m]$ with $k$ colors. By Theorem \ref{theorem:Ramsey} there is an $H$ with $|H| = l$ such that $\gamma$ is monochromatic on all subsets of size at most $n-1$ of $H$. This implies that $\binom{l}{s}$ divides $\sum_{I \subseteq H, |I| = s} \gamma_{i,j}(I)$ for every $s < n$. Since $l = e \cdot (n-1)!$, we know that $\binom{l}{s}$ is divisible by $e$ for every $s < n$. Hence also $\alpha_{i,j} - \beta_{i,j}$ is divisible by $e$, which concludes the proof.
\end{proof}

We remark that Lemma~\ref{lemma:interpolation} gives us a characterization of finite supernilpotent algebra with Mal'cev term:

\begin{corollary}
Let $\algA$ be a finite algebra with Mal'cev term and $0 \in A$. Then $\algA$ is supernilpotent if and only if there is a positive integer $d$ such that for every polynomial $f(x_1,\ldots,x_m) \in \Pol(\algA)$ and every tuple $\bar r \in A^m$ there is an index set $T \subseteq [m]$ with $|T| \leq d$ and $f(\bar r) = f(\bar r_T)$.
\end{corollary}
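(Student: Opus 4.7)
The forward implication is essentially a reformulation of Lemma~\ref{lemma:interpolation}: if $\algA$ is supernilpotent of degree $n$, I would take $d = d(\algA)$ as provided by that lemma. For a given $f \in \Pol_m(\algA)$ and $\bar r \in A^m$, if $m \leq d$ I can take $T = [m]$, and if $m > d$ Lemma~\ref{lemma:interpolation} directly produces a set $T \in \binom{[m]}{d}$ with $f(\bar r_T) = f(\bar r)$. So the only work is in the converse.

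For the converse I would argue by contraposition, using the absorbing characterization of supernilpotency in Theorem~\ref{theorem:absorbing}. Suppose $\algA$ is not supernilpotent. Then for every positive integer $n$, in particular for $n = d+1$, there exists a $0$-absorbing polynomial $c \in \Pol_{d+1}(\algA)$ which is not equivalent to $0$. Pick $\bar r = (r_1, \ldots, r_{d+1}) \in A^{d+1}$ with $c(\bar r) \neq 0$.

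Now observe that because $c$ is $0$-absorbing, every coordinate $r_i$ must be nonzero, as otherwise $c(\bar r)$ would equal $0$. Consequently, for any proper subset $T \subsetneq [d+1]$, the tuple $\bar r_T$ has at least one $0$-entry (namely at any index in $[d+1] \setminus T$), and hence $c(\bar r_T) = 0 \neq c(\bar r)$. The only index set $T$ with $c(\bar r_T) = c(\bar r)$ is therefore $T = [d+1]$, whose size $d+1$ exceeds $d$. This contradicts the interpolation hypothesis, completing the proof.

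The main obstacle, if any, is purely conceptual: one needs to notice that Theorem~\ref{theorem:absorbing} supplies exactly the right ``test polynomials'' to defeat any proposed interpolation bound $d$, and that $0$-absorption forces the witness tuple $\bar r$ to have \emph{all} coordinates nonzero, so that no proper coordinate subset can reproduce the value. Once this is seen, no further computation is required.
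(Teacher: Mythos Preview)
Your proof is correct and matches the paper's approach exactly: the forward direction is Lemma~\ref{lemma:interpolation}, and the converse is the contrapositive via Theorem~\ref{theorem:absorbing}, producing for each proposed $d$ a $0$-absorbing $(d{+}1)$-ary polynomial that cannot be interpolated on any proper subset. Your observation that all $r_i$ are nonzero is true but not actually needed, since by definition $\bar r_T$ already has a $0$ at every index outside $T$, which is all that $0$-absorption requires.
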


\begin{proof}
It only remains to show that if $\algA$ is not supernipotent, it does not have the interpolation property described above. By Theorem~\ref{theorem:absorbing} for every $d \in \mathbb N$ there is a $0$-absorbing polynomial $f(x_1,\ldots,x_d,x_{d+1})$ and a tuple $\bar r \in A^{d+1}$ such that $f(\bar r) \neq 0$. But as $f$ is $0$-absorbing, $f(\bar r_T) = 0$ holds for every $T \subseteq [d+1]$, $T \neq [d+1]$.
\end{proof}

Lemma \ref{lemma:interpolation} now implies our main result:

\begin{theorem} \label{theorem:supernilpotent}
Let $\algA$ be a finite supernilpotent algebra with Mal'cev term. Then the equation solvability problem for $\algA$ can be decided in polynomial time.
\end{theorem}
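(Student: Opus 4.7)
The plan is to combine Lemma~\ref{lemma:interpolation} with the observation that supernilpotency gives us a bounded-support search space, so brute-forcing over that space runs in polynomial time.

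First, as noted in the preamble to the section, since $\algA$ has a Mal'cev term it has polynomial loop operations $\cdot, \backslash, /, 0$ (Theorem~\ref{theorem:loops}), and expanding $\algA$ by these operations does not change the complexity of $\pEq(\algA)$. So I may assume $\algA$ carries the loop structure, and any input equation $f(\bar x) = g(\bar x)$ can be rewritten in linear time as $f(\bar x) / g(\bar x) = 0$. Hence the problem reduces to: given $f(x_1, \ldots, x_m) \in \Pol_m(\algA)$, decide whether there exists $\bar b \in A^m$ with $f(\bar b) = 0$.

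Now let $d = d(\algA)$ be the constant from Lemma~\ref{lemma:interpolation}. The key claim is that $f$ has a solution if and only if $f$ has a solution $\bar c \in A^m$ whose support $\{i : c_i \neq 0\}$ has size at most $d$. The ``if'' direction is trivial, and the ``only if'' direction follows directly from Lemma~\ref{lemma:interpolation}: if $f(\bar b) = 0$ for some $\bar b$, then there is $T \in \binom{[m]}{d}$ with $f(\bar b_T) = f(\bar b) = 0$, and $\bar b_T$ has support contained in $T$. (For $m < d$ there is nothing to prove, as the whole tuple has support bounded by $d$.)

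The algorithm is then straightforward: enumerate all subsets $T \subseteq [m]$ with $|T| \leq d$, and for each $T$ enumerate all $|A|^{|T|}$ assignments of nonzero values to the variables indexed by $T$ (with the remaining variables set to $0$); for each such candidate $\bar c$, evaluate $f(\bar c)$ and return ``yes'' if any of these evaluations yields $0$, otherwise return ``no''. The number of candidates is at most $\binom{m}{d} \cdot |A|^d = O(m^d)$, which is polynomial in $m$ since $d$ and $|A|$ are constants depending only on $\algA$. Evaluating $f$ on a given tuple takes time linear in $l(f)$, so the whole procedure runs in time $O(m^d \cdot l(f))$, which is polynomial in the input size.

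The only potential obstacle is already absorbed into Lemma~\ref{lemma:interpolation}: once we know that supernilpotency forces the existence of a small-support solution whenever any solution exists, the tractability proof is just an enumeration argument. No further commutator-theoretic or algebraic work is required at the level of the theorem itself.
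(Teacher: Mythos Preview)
Your proof is correct and follows essentially the same route as the paper's: reduce to equations of the form $f(\bar x)=0$ via the loop structure, invoke Lemma~\ref{lemma:interpolation} to guarantee a small-support solution whenever any solution exists, and then brute-force over the $O(m^d)$ candidate tuples. The only minor imprecision is the claim that expanding by the loop operations ``does not change the complexity'' of $\pEq(\algA)$; the paper is more careful and only asserts a one-way reduction from $\pEq(\algA)$ to the expanded algebra, which is all that is needed here.
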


\begin{proof}
By the discussion at the beginning of this section we only have to consider equations of the form $f(x_1,\ldots,x_m) = 0$ as input. By Lemma \ref{lemma:interpolation}, there is a solution $\bar b$ with $f(\bar b) = 0$ if and only if $f(\bar b_T) = 0$ for some $T$ with $|T| = \min(d,m)$, where $d$ is a constant only depending on $\algA$. For $m \geq d$ there are $|A|^d \cdot \binom{m}{d} = \mathcal{O}(m^d)$ many tuples of the form $\bar b_T$. Hence evaluating $f$ on all those tuples and checking whether the result is $0$ takes polynomial time $\mathcal{O}(l(f(\bar x))^d)$ and yields whether $f(x_1,\ldots,x_m) = 0$ is solvable.
\end{proof}

We remark that due to the use of Ramsey's theorem the value of $d$ in Lemma~\ref{lemma:interpolation} might be very large; we can only obtain upper bounds that are superexponential in $|\algA|$. The best known algorithm for nilpotent groups $G$ runs in polynomial with exponent ${\frac{1}{2} |G|^2 \log(|G|)}$ and is due to F\"oldv\'ari~\cite{Attila-nilpotentEqs}. This indicates that our algorithm in Theorem \ref{theorem:supernilpotent} might be far from being optimal.

\section{Nilpotent algebras with hard equation solvability and identity checking problems} \label{section:nilpotent}

For every prime $p$ let $\algA_p = (\Z_{p^2},+,0,-,(f_n)_{n \in \mathbb N})$ be the cyclic group of order $p^2$, together with the $n$-ary operations $f_n(x_1,\ldots,x_n) = p \cdot x_1 \cdot x_2 \cdots x_n$ for every $n \in \mathbb N$. In this section we are going to show that $\pEq(\algA_p)$ is NP-complete and $\pId(\algA_p)$ is co-NP-complete for every $p > 2$.

It is easy to see that $\algA_p$ is nilpotent of degree 2 for every $p$; the equivalence classes of $[\mathbf{1}_{\algA_p},\mathbf{1}_{\algA_p}]$ are exactly the cosets of $\Z_p$. Furthermore it follows straightforward from Theorem~\ref{theorem:absorbing} that $\algA_p$ is not supernilpotent, since every function $f_n(x_1,\ldots,x_n)$ is $0$-absorbing but not equivalent to $0$. However we remark that every restriction of $\algA_p$ to finitely many of its operators gives us a supernilpotent algebra.

It is a priori not clear how to encode the input when phrasing $\pId(\algA_p)$ as computational problem, since $\algA_p$ is of infinite type. However, in every arity there is exactly one operation $f_n$ so one can still find a reasonable such encoding of terms, i.e. one where the size of an input polynomial $f(\bar x)$ is linear in its length $l(f(\bar x))$. With respect to such an encoding the following holds:

\begin{theorem} \label{theorem:hardness}
Let $p$ be an odd prime and let $\algA_p = (\Z_{p^2},+,0,-,(f_n)_{n \in \mathbb N})$ with $f_n(x_1,\ldots,x_n) = p \cdot x_1 \cdot x_2 \cdots x_n$. Then the equation solvability problem $\pEq(\algA_p)$ is NP-complete and the identity checking problem $\pId(\algA_p)$ is co-NP-complete.
\end{theorem}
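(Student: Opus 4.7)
My plan is to use a single combinatorial reduction from graph $p$-coloring, together with the reduction from the complement of $\pId$ to $\pEq$ already observed in the paper. The easy upper bounds come first: a polynomial of $\algA_p$ of length $\ell$ can be evaluated at any tuple in $\Z_{p^2}^n$ in time $O(\ell)$ by bottom-up traversal, so a satisfying tuple is a polynomial-size NP-witness for $\pEq(\algA_p)$, and a tuple with $f(\bar x)\neq g(\bar x)$ is a polynomial-size NP-witness for the complement of $\pId(\algA_p)$. Hence $\pEq(\algA_p)\in \mathrm{NP}$ and $\pId(\algA_p)\in\mathrm{coNP}$.

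For coNP-hardness of $\pId(\algA_p)$ I would reduce graph $p$-coloring---NP-complete for every integer $p\geq 3$, hence for every odd prime $p$---to the complement of $\pId(\algA_p)$. Given $G=(V,E)$, introduce a variable $y_v$ per vertex, and for each edge $e=\{v,w\}$ the linear polynomial $L_e(\bar y):=y_v-y_w$. Set
\[
g_G(\bar y) \;:=\; f_{|E|}\bigl(L_{e_1},\ldots,L_{e_{|E|}}\bigr) \;=\; p\cdot \prod_{e\in E} L_e(\bar y)\pmod{p^2},
\]
a polynomial of length $O(|V|+|E|)$. Because $p\cdot a\equiv 0\pmod{p^2}$ iff $a\equiv 0\pmod p$, we have $g_G(\bar y)\neq 0$ iff $y_v\not\equiv y_w\pmod p$ for every edge $\{v,w\}\in E$; equivalently iff $\bar y\bmod p\in\Z_p^V$ is a proper $p$-coloring of $G$. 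Thus $g_G$ is not identically zero iff $G$ is $p$-colorable, yielding $\pId(\algA_p)$ coNP-hard.

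NP-hardness of $\pEq(\algA_p)$ then follows by chaining through the polynomial-time reduction from the complement of $\pId(\algA)$ to $\pEq(\algA)$ noted at the start of Section~\ref{section:supernilpotent}; this applies because $\Z_{p^2}$ is finite (the infinite type of $\algA_p$ plays no role). For transparency one can also give a direct reduction: putting $F_G(\bar y,z):=f_{|E|+1}(L_{e_1},\ldots,L_{e_{|E|}},z)-p$, the equation $F_G=0$ is solvable iff $z\cdot \prod_e L_e(\bar y)\equiv 1\pmod p$, iff $\prod_e L_e(\bar y)$ is a unit mod $p$ with $z$ its inverse, iff $\bar y\bmod p$ is a proper $p$-coloring of $G$.

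The only genuine subtlety is the low-$p$ case: for $p=2$ the same construction would reduce to $2$-colorability (bipartiteness), which is in P, so the hypothesis that $p$ be odd (forcing $p\geq 3$) is essential and not merely cosmetic. Beyond that there is no serious obstacle, because the unbounded arity of the operations $f_n$ of $\algA_p$ already lets us encode the entire edge product $\prod_{e\in E}L_e$ as a single application $f_{|E|}(L_{e_1},\ldots,L_{e_{|E|}})$---no deeper circuit design or polynomial-identity-testing argument is needed.
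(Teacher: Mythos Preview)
Your proof is correct and follows essentially the same reduction from graph $p$-coloring as the paper. The only difference is cosmetic: where you introduce an auxiliary variable $z$ (or invoke the co-$\pId\to\pEq$ reduction) to turn ``$\prod_e L_e\not\equiv 0\pmod p$'' into a single solvable equation, the paper instead raises each edge factor to the $(p-1)$-th power, so that by Fermat's little theorem the product is always $0$ or $1$ mod $p$ and the equation $t_G=p$ is directly equivalent to $p$-colorability.
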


\begin{proof}
We prove that $\pEq(\algA_p)$ is NP-complete by reducing the graph $p$-colorability problem to it. To do so, for every instance of a graph $G = (V,E)$ we define the term:
$$t_G((x_v)_{v \in V}) = f_{(p-1)\cdot|E|}\left((x_{v_1} - x_{v_2})_{\substack{(v_1,v_2) \in E \\ i \in [p-1]}} \right) = p \cdot \prod_{(v_1,v_2) \in E}{(x_{v_1} - x_{v_2})^{p-1}}$$ 
Note that the order of edges is irrelevant and for a tuple $(r_v)_{v \in V}$ in $\Z_{p^2}$ the value of $t_G((r_v)_{v \in V})$ only depends on the cosets of $r_v$ with respect to $\Z_p$. Moreover $t_G((r_v)_{v \in V}) = 0$ holds if and only if there is an edge $(v_1,v_2) \in E$ such that $r_{v_1}$ and $r_{v_2}$ are in the same coset of $\Z_p$; otherwise $t_G((r_v)_{v \in V}) = p$.

Thus, if the equation $t_G((x_v)_{v \in V}) = p$ has a solution $(r_v)_{v \in V}$, then the coloring that assigns to each vertex $v$ the color $r_v \Z_p$ is a proper coloring of the graph $G$ with $p$ colors. Conversely every coloring of $G$ with $p$ many colors induces a solution of the equation (by assigning to every color a unique coset of $\Z_p$). Thus $p$-colorability reduces to $\pEq(\algA_p)$, which consequently is NP-complete.

Analogously a graph is not $p$-colorable if and only if $t_G((x_v)_{v \in V}) = 0$ holds for all values of $(x_v)_{v \in V}$. Thus $\pId(\algA_p)$ is coNP-complete.
\end{proof}

We conclude with the question, whether this hardness result fits into a bigger context. By~\cite{HorvathSzabo-extendedgroups} and~\cite{Horvathetal-nonsolvablegroups} every non-nilpotent group has a polynomial extension, whose identity checking problem is co-NP-complete. By~\cite{BurrisLawrence-rings} also for rings this statement is true. Therefore we ask:

\begin{question} \label{question:hardness}
Does every non-supernilpotent finite algebra with Mal'cev term have a polynomial extension, whose 
\begin{itemize}
\item identity checking problem is co-NP-complete?
\item equation solvability problem is NP-complete?
\end{itemize}
\end{question}

A first step in answering Question~\ref{question:hardness} would be to study the question for nilpotent, but not supernilpotent algebras. In this case we have much structural information to work with, due to Theorem~\ref{theorem:loops} and Theorem~\ref{theorem:absorbing}. Note that Question~\ref{question:hardness} might have different answers, depending on the encoding of the input (see also the discussion in Section~\ref{sect:eqsolv}), and also depending on whether we restrict ourselves to algebras of finite type or not, as in our example.\\

\textbf{Recent progress:} After the submission of this article it came to the authors attention that Idziak and Krzaczkowski independently proved Theorem \ref{theorem:supernilpotent} in their paper \cite{IdziakKrzaczkowski}, where they studied the equation solvability problem and the identity checking problem in the more general setting of algebras from congruence modular varieties. Moreover they proved several hardness results that partially answer Question \ref{question:hardness}: By their work, every non-solvable algebra $\algA$ with a Mal'cev term has polynomial extensions with hard $\pEq$ and $\pId$ problems. Furthermore every solvable, but non-nilpotent  algebra $\algA$ with Mal'cev term has a quotient for which Question \ref{question:hardness} has a positive answer. However in the nilpotent, but not supernilpotent case, the situation seems to be more complicated \cite{IdziakKrzaczkowski_private}: There are 2-nilpotent, but non-supernilpotent algebras of finite type such that every extension of it by finitely many polynomials has tractable equation solvability and identity checking problem (even if the input polynomials are encoded by circuits). However an extension of these algebras by \emph{infinitely many} polynomials induced hardness of both problems as in the example of Theorem \ref{theorem:hardness}.

\section*{Acknowledgments}
The author would like to thank Jakub Op\v{r}sal for introducing him to (higher) commutators and giving several other helpful remarks.

\bibliographystyle{abbrv}
\bibliography{kompatscher_equation_solvability}

\end{document}